\providecommand\@dotsep{5}
\def\listtodoname{List of Todos}
\def\listoftodos{\@starttoc{tdo}\listtodoname}
\newtheorem{theorem}{Theorem}[section]
\newtheorem{proposition}[theorem]{Proposition}
\newtheorem{corollary}[theorem]{Corollary}
\newtheorem{step}{Step}
  \theoremstyle{definition}
\newtheorem{definition}[theorem]{Definition}
\newtheorem{remark}[theorem]{Remark}
\newcommand{\dbR}{\mathbb{R}}
\newcommand{\dbZ}{\mathbb{Z}}
\newcommand{\calF}{{\mathcal F}}
\newcommand{\calFn}{{\mathcal F}^n}
\newcommand{\calG}{{\mathcal G}}
\newcommand{\calH}{{\mathcal H}}
\newcommand{\calO}{{\mathcal O}}
\newcommand{\calP}{{\mathcal P}}
\newcommand{\orf}[1]{O_\calF #1}
\newcommand{\vcyc}{V\text{\tiny{\textit{CYC}}}}
\newcommand{\fin}{F\text{\tiny{\textit{IN}}}}
\newcommand{\nbeq}{\begin{equation}}
\newcommand{\neeq}{\end{equation}}
\newcommand{\beq}{\begin{equation*}}
\newcommand{\eeq}{\end{equation*}}
\DeclareMathOperator{\vertex}{vert}
\DeclareMathOperator{\edge}{edge}
\DeclareMathOperator{\vrt}{vert}
\DeclareMathOperator{\cd}{cd}
\DeclareMathOperator{\vcd}{vcd}
\DeclareMathOperator{\cdfin}{\underline{cd}}
\DeclareMathOperator{\cdvc}{\underline{\underline{cd}}}
\DeclareMathOperator{\cdfn}{\cd_{\calF_n}}
\DeclareMathOperator{\gd}{gd}
\DeclareMathOperator{\gdfin}{\underline{gd}}
\DeclareMathOperator{\gdvc}{\underline{\underline{gd}}}
\DeclareMathOperator{\gdfn}{\gd_{\calF_n}}
\DeclareMathOperator{\stab}{Stab}
\DeclareMathOperator{\fix}{Fix}
\begin{document}

\title[Eilenberg-Ganea problem for families]{Groups acting on trees and the Eilenberg-Ganea problem for families}

\author{Luis Jorge S\'anchez Salda\~na}
\address{Departamento de Matemáticas,
Facultad de Ciencias, Universidad Nacional Autónoma de México, Circuito Exterior S/N, Cd. Universitaria, Colonia Copilco el Bajo, Delegación Coyoacán, 04510, México D.F., Mexico}
\email{luisjorge@ciencias.unam.mx}

\subjclass[2010]{Primary 57M20, 20J05; Secondary 55N25}

\date{}


\begin{abstract}
We construct new examples of groups with cohomological dimension 2 and geometric dimension 3 with respect to the families of finite subgroups, virtually abelian groups of bounded rank, and the family of virtually poly-cyclic subgroups. Our main ingredients are the examples constructed by Brady-Leary-Nucinckis and Fluch-Leary,  and Bass-Serre theory. 
\end{abstract}
\maketitle

\section{Introduction}

Let $G$ be a discrete group.  A non-empty collection $\calF$ of subgroups of
$G$ is called a \emph{family} if it
is closed under conjugation and under taking subgroups. We
call a $G$-CW-complex $X$ a \emph{model} for \emph{the classifying space} $E_{\calF}G$ if the following conditions are satisfied:
\begin{enumerate}
  \item For all $x\in X$, the isotropy group $G_x$ belongs to $\calF$.

  \item For all  $H\in\calF$ the subcomplex $X^H$  of $X$, consisting of points in $X$ that are fixed under all elements of $H$, is contractible. In particular $X^H$ is non-empty.
\end{enumerate}
Equivalently, $X$ is a model for $E_\calF G$ if, for every $G$-CW-complex $Y$ with isotropy groups in $\calF$, there exists a $G$-map, unique up to $G$-homotopy, $Y\to X$.

Such a model always exists for every discrete group $G$ and every family of subgroups $\calF$ of $G$. Moreover, every two models for $E_\calF G$ are $G$-homotopically equivalent. The \emph{$\calF$-geometric dimension of $G$}, denoted $\gd_\calF(G)$, is the minimum $n$ for which there exists an $n$-dimensional model for $E_\calF G$.

On the other hand, given $G$ and $\calF$ we have the so-called restricted orbit category $\calO_\calF G$, which has as objects the homogeneous $G$-spaces $G/H$, $H\in \calF$, and morphisms are  $G$-maps. A  \emph{$\calO_\calF G$-module} is a contraviariant functor from $\calO_\calF G$ to the category of abelian groups, and a morphism between two $\calO_\calF G$-modules is a natural transformation of the underlying functors. Denote by $\calO_\calF G$-mod the category of $\calO_\calF G$-modules. It turns out that $\calO_\calF G$-mod is an abelian category with enough projectives. Thus we can define a $G$-cohomology theory for $G$-spaces $H_\calF^*(-;M)$ for every $\calO_\calF G$-module $M$ (see \cite[pg.~7]{MV03}).  The
\emph{$\calF$-cohomological dimension of $G$}---denoted $\cd_{\calF}(G)$---is the
largest non-negative $n\in\dbZ$ for which the cohomology group $H^n_{\calF}(G;M)=H^n_{\calF}(E_\calF G;M)$
is nontrivial for some $M\in\mathcal{O}_{\calF}G \textrm{-mod}$. Equivalently, $\cd_\calF(G)$ is the length of the shortest projective resolution of the constant $\orf{G}$-module $\dbZ_\calF$, where $\dbZ_\calF$ is given by $\dbZ_\calF(G/H)=\dbZ$, for all $H\in \calF$, and every morphism of $\orf{G}$ goes to the identity function.

From \cite{LM00} we have the following Eilenberg-Ganea type theorem. For every group $G$ and every family of subgroups $\calF$ we have
    \[
    \cd_\calF(G)\leq \gd_\calF(G)\leq \max\{3,\cd_\calF(G)\}.
    \]
Therefore, if $\cd_\calF(G)\geq 3$, then $\cd_\calF(G)=\gd_\calF(G)$.  The first inequality actually holds for any group $G$, and any family $\calF$, since the augmented Bredon cellular chain complex of any model for  $E_\calF G$ is a free resolution of $\dbZ_\calF$.

It is easy to prove that $\cd_\calF(G)=0$ if and only if $\gd_\calF(G)=0$. In dimension $1$ things seem to be harder (see \cite{Deg17} and \cite{LO18}). It is worth saying that for the trivial family, the family of finite subgroups and the family of virtually cyclic subgroups (provided $G$ is countable) it is known that  $\cd_\calF(G)=1$ if and only if $\gd_\calF(G)=1$. Still, in full generality, it is open the possibility of  having $1\leq\cd_\calF(G)<\gd_\calF(G)\leq 3$, for some group $G$ and some family $\calF$.

For the families $\fin$ and $\vcyc$ of finite and virtually cyclic subgroups respectively, there are examples due to Brady-Leary-Nucinkis \cite{BLN01}, and Fluch-Leary \cite{FL14} of groups $G$ with cohomological dimension 2 and geometric dimension 3.
Explicitly, let $G$ be a  right angled Coxeter group such that the nerve of the group 
is $2$-dimensional, acyclic, and cannot be embedded in any contractible $2$-complex, then $\cdfin(G)=2$ and $\gdfin(G)=3$. If, additionally, $G$ is word-hyperbolic, then $\cdvc(G)=2$ and $\gdvc(G)=3$. Also in  \cite{LP17}, Leary-Petrosyan generalized and strengthened the examples given by Brady-Leary-Nucinkis.

In the present note we construct new  examples, out of known examples, of groups satisfying $\cd_\calF(G)=2$ and $\gd_\calF(G)=3$ for the family of finite subgroups, the family of virtually abelian groups of bounded rank, and the family of virtually poly-cyclic subgroups of bounded rank. Our examples are fundamental groups of graphs of groups (in the sense of Bass and Serre), subject to certain restrictions on the vertex and edge groups, and the action of the group on its Bass-Serre tree. 

\vskip 10pt

The present paper is organized as follows. In Section \ref{sec:prelim} we recall the notation of graphs of groups and we give a procedure to construct classifying spaces for fundamental groups of graphs of groups. In Section \ref{sec:main:theorem} we state and prove our main result, \Cref{main:thm}. Finally in Section \ref{section:examples} we give some applications of our main theorem.

\subsection*{Acknowledgements}
 The author wishes to  thank Jean-Fran\c cois Lafont for many useful discussions and feedback on early versions of this paper, Eduardo Mar\-t\'inez-Pedroza for some useful conversations, and Nansen Petrosyan for pointing out reference \cite{LP17}. The author thanks to Arilín Haro for uncountable stimulating conversations. Also the author thanks the anonymous referee for useful comments that helped to improve the exposition of the present paper. This work was funded by the Mexican Council of Science and Technology via the program \textit{Estancias postdoctorales en el extranjero}, and  by the NSF, via grant DMS-1812028.

 \section{Preliminaries}\label{sec:prelim}

\subsection{Graphs of groups} In this subsection we give a quick review of Bass-Serre theory, referring the reader to \cite{Se03} for more details. A graph (in the sense of Bass and Serre) consists of a set of vertices $V=\vertex Y$, a set of (oriented) edges $E=\edge Y$, and two maps $E\to V\times V$, $y\mapsto (o(y),t(y))$, and $E\to E$, $y\mapsto \overline{y}$ satisfying $\overline{\overline{y}}=y$, $\overline{y}\neq y$, and $o(y)=t(\overline{y})$. The vertex $o(y)$ is called the \emph{origin} of $y$, and the vertex $t(y)$ is called the \emph{terminus} of $y$.

An orientation of a graph $Y$ is a subset $E_+$ of $E$ such that $E=E_+\bigsqcup \overline{E}_+$. We can define  \emph{path} and \emph{circuit} in the obvious way.

A \emph{graph of groups} $\mathbf{Y}$ consists of a graph $Y$, a group $Y_P$ for each $P\in \vertex Y$, and a group $Y_y$ for each $y\in \edge Y$, together with monomorphisms $Y_y\to Y_{t(y)}$. One requires in addition $Y_{\overline{y}}=Y_y$.

Suppose that the group $G$ acts without inversions on a graph $X$, i.e. for every $g\in G$ and $x\in \edge X$ we have $g x\neq\overline{x} $. Then we have an induced graph of groups with underlying graph $X/G$ by  associating to each vertex (resp. edge) the isotropy group of a preimage under the quotient map $X\to X/G$ (see \cite[\S~1.5.4]{Se03}).

Given a graph of groups $\mathbf{Y}$, one of the classic theorems of Bass-Serre theory provides the existence of a group $G=\pi_1(\mathbf{Y})$, called the \emph{fundamental group of the graph of groups $\mathbf{Y}$} and a tree $T$ (a graph with no cycles), called the \emph{Bass-Serre tree of $\mathbf{Y}$}, such that $G$ acts on $T$ without inversions, and the induced graph of groups is isomorphic to $\mathbf{Y}$ (see \cite[\S~1.5.3~Theorem~12]{Se03}). The identification $G=\pi_1(\mathbf{Y})$ is called a \emph{splitting} of $G$.

\subsection{Graph of groups and classifying spaces}
Analogously to a graph of groups we can define a graph of spaces $\mathbf{X}$ as a graph $X$, connected CW-complexes $X_P$ and $X_y$ for each vertex $P$ and each edge $y$, and closed cellular embeddings $X_y\to X_{t(y)}$. 
Then we will have a CW-complex, called the \emph{geometric realization}, that is assembled by gluing the ends of the product space $X_y\times [0,1]$ to the spaces $X_{t(y)}$, $X_{o(y)}$.

Given a graph of spaces $\mathbf{X}$ with $\pi_1(X_y)\to \pi_1(X_P)$ injective, there is an associated graph of groups $\mathbf{Y}$ with the same underlying graph and whose vertex (resp. edge) groups are the fundamental groups of the corresponding vertex (resp. edge) CW-complexes. Then, as a generalization of the Seifert--van Kampen theorem, we have that the fundamental group of the geometric realization of $\mathbf{X} $ is naturally isomorphic to the fundamental group of the graph of groups $\mathbf{Y}$.

\vskip 10pt

Let $\mathbf{Y}$ be a graph of groups with fundamental group $G$, and let $\calF$ be a family of subgroups of $G$. Let $T$ be the Bass-Serre tree of $\mathbf{Y}$. We are going to construct a graph of spaces $\mathbf{X}$, with underlying graph $T$, using the classifying spaces of the edge groups and the vertex groups, and the corresponding families $Y_y\cap\calF$ and $Y_P\cap \calF$. Let $X_{P}$ be a model for the classifying space $E_{Y_P\cap\calF} Y_P$, and $X_{y}$ be a model for $E_{Y_y\cap\calF} Y_y$, for every vertex $P$ and edge $y$ of $Y$. Hence for every monomorphism $Y_y\to Y_{t(y)}$ we have a $Y_y$-equivariant cellular map (unique up to $Y_y$-homotopy) $X_y \to X_{t(y)}$ which leads to the $G$-equivariant cellular map $G\times_{Y_y} X_y \to G\times_{Y_{t(y)}} X_{t(y)} $. This gives us the information required to define a graph of spaces $\mathbf{X}$ with underlying graph $T$, the Bass-Serre tree of $\mathbf{Y}$. Moreover, we have a cellular $G$-action (actually a $G$-CW-structure) on the geometric realization $X$ of $\mathbf{X}$.

Let us describe $X$ in greater detail. Consider the geometric realization of the Bass-Serre tree of $\mathbf{Y}$, and abusing of notation denote it by $T$. Hence $T$ is a $1$-dimensional and contractible CW-complex. Also $G$ acts on $T$ by cellular automorphism and the quotient space $T/G$ is isomorphic to the geometric realization of $Y$ (we denote this geometric realization by $Y$). Hence the $0$-skeleton of $T$ can be identified with $\coprod_{P\in \vertex Y} G/G_P=\coprod_{P\in \vertex Y} G\times_{G_P}\{*\}$, where $\{*\}$ is the one-point space, and $T$ is given by  the following $G$-pushout

\[\xymatrix{\coprod_{y\in E_+} G\times_{G_y}\{0,1\} \ar[d] \ar[r]  & \coprod_{P\in \vertex Y} G\times_{G_P}\{*\}\ar[d]\\
\coprod_{y\in E_+} G\times_{G_y}[0,1]\ar[r] & T
}
\]
where the left vertical map is the inclusion map, and the upper horizontal map is the disjoint union of the attaching maps of the one cells, and $E_+$ is an orientation for the graph $Y$ (considered as a graph in the sense of Serre).

Next we modify the above pushout to obtain $X$. The modification consists on replacing every $0$-cell $x$ of $T$ by the chosen model for the classifying space $E_{\calF\cap G_x}G_x$, and every $1$-cell $I$ with isotropy group $H$ by $I\times E_{\calF\cap H}H$. Explicitly, $X$ is given by the following $G$-pushout

\begin{equation}\label{pushout:bass:serre}
    \xymatrix{\coprod_{y\in E_+} G\times_{G_y}(\{0,1\}\times X_y) \ar[d] \ar[r]  & \coprod_{P\in \vertex Y} G\times_{G_P}\times X_P\ar[d]\\
\coprod_{y\in E_+} G\times_{G_y}([0,1]\times X_y)\ar[r] & X
}
\end{equation}

The proof of the following result is completely analogous to \cite[Proposition~4.7]{JLSS}.

\begin{proposition}
\label{bass serre construction}
The geometric realization $X$ of the graph of spaces $\mathbf{X}$ constructed above is a model for $E_{\calF'}G$, where $\calF'$ is the family of all elements of $\calF$ that are conjugated to a subgroup in  $Y_P$, for some $P\in \vertex Y$. In particular, there exists a model for $E_{\calF'} G$ of dimension  \[ \max\{\gd_{\calF\cap Y_y}(Y_y)+1,\gd_{\calF\cap Y_P}(Y_P)| y\in \edge Y, P\in \vertex Y \}. \]
\end{proposition}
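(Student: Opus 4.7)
The plan is to mimic the proof of \cite[Proposition~4.7]{JLSS}: verify directly that $X$ satisfies the two defining properties of a model for $E_{\calF'}G$, and then read off the dimension bound from the pushout (\ref{pushout:bass:serre}). So I would check (a) every isotropy group of $X$ lies in $\calF'$, and (b) for each $H\in\calF'$ the fixed-point set $X^H$ is contractible. The dimension statement is immediate from (\ref{pushout:bass:serre}): each cell of $X$ either sits in some translate of $X_P$, contributing at most $\gd_{\calF\cap Y_P}(Y_P)$ dimensions, or in some translate of $[0,1]\times X_y$, contributing at most $\gd_{\calF\cap Y_y}(Y_y)+1$.

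Property (a) is a bookkeeping check. The pushout (\ref{pushout:bass:serre}) shows that every point of $X$ is $G$-equivalent either to a point of some $X_P$ or to a point of $[0,1]\times X_y$. Its stabilizer is therefore a subgroup of $Y_P$ (resp.\ $Y_y$) lying in $\calF\cap Y_P$ (resp.\ $\calF\cap Y_y$), by the choice of $X_P$ and $X_y$ as classifying spaces for those restricted families. Since edge groups inject into vertex groups, in either case the stabilizer is conjugate in $G$ to a subgroup of some $Y_P$ belonging to $\calF$, i.e.\ to an element of $\calF'$.

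The substantive step is (b). Given $H\in\calF'$, after conjugation we may assume $H\subseteq Y_P$ for some vertex $P$, so $H$ fixes at least one vertex of the Bass-Serre tree $T$. A subgroup of $G$ acting on $T$ and fixing a vertex automatically fixes the geodesic between any two of its fixed points, so $T^H$ is a nonempty subtree of $T$ and in particular contractible. Applying the fixed-point functor $(-)^H$ to the pushout (\ref{pushout:bass:serre})---which commutes with pushouts along $G$-CW-inclusions such as the left vertical map---expresses $X^H$ as the analogous pushout with each piece replaced by its $H$-fixed subspace. A direct computation yields $(G\times_{G_Q} X_Q)^H=\coprod_{[g]\in(G/G_Q)^H} X_Q^{g^{-1}Hg}$, and each summand is contractible since $g^{-1}Hg\in\calF\cap G_Q$ and $X_Q$ is a classifying space for that family; the same observation handles the edge pieces. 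Thus $X^H$ is obtained by gluing contractible spaces---one over each vertex and each edge of $T^H$---along contractible subspaces, following the combinatorics of the contractible tree $T^H$, and a standard induction on finite subtrees shows it is contractible. The subtlest point, and where I would be most careful, is the commutation of $(-)^H$ with the pushout (\ref{pushout:bass:serre}) and its preservation of the relevant cofibrations, so that each inductive stage really is a pushout of contractible CW-complexes along contractible subcomplexes.
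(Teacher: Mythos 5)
Your proposal is correct and follows essentially the same route the paper takes: the paper simply defers to the analogous \cite[Proposition~4.7]{JLSS}, whose argument is exactly this direct verification that isotropy groups lie in $\calF'$ and that $X^H$ fibers over the nonempty subtree $T^H$ with contractible pieces over vertices and edges, plus reading the dimension off the pushout (\ref{pushout:bass:serre}). The points you flag as delicate (commuting $(-)^H$ with the pushout along the $G$-cofibration, and the colimit over finite subtrees) are indeed the right ones and are handled in the cited reference.
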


\begin{remark}\label{remark:F0}
Note that $\calF'$ can also be described as the subset of $\calF$ whose elements fix one vertex of $T$.
\end{remark}

\section{Main theorem}\label{sec:main:theorem}

In order to state our main theorem we need to introduce some notation.

\vskip 10pt

Let $G$ be a group and $\calF$ be a family of subgroups of $G$. We say that $G$ is an \emph{$\calF$-Eilenberg-Ganea group} if $\gd_\calF(G)=3$ and $\cd_\calF(G)=2$.

\vskip 10pt

Let $\mathbf{Y}$ be a graph of groups  and let $\calF$ be a family of subgroups of $G=\pi_1(\mathbf{Y})$. We say $\mathbf{Y}$ is \emph{$\calF$-admissible} if the following conditions hold:
\begin{itemize}
    \item There exists a vertex $P$ such that $Y_P$ is a $(Y_P\cap \calF)$-Eilenberg-Ganea group.
    \item  For all vertices $P$ of $\mathbf{Y}$ we have $\gd_{\calF\cap Y_P}(Y_P)\leq 3$ and $\cd_{\calF\cap Y_P}(Y_P)\leq 2$.
    \item  For all edges $y$ of $\mathbf{Y}$ we have $\gd_{\calF\cap Y_y}(Y_y)\leq 2$ and $\cd_{\calF\cap Y_y}(Y_y)\leq 1$.
\end{itemize}

\begin{definition}\label{rel:acylindrical:def}
Let  $\mathbf{Y}$ be a graph of groups with fundamental group $G$, let $\calP$  collection of subgroups of $G$, and let $T$ be the Bass-Serre tree of $\mathbf{Y}$. The splitting of $G$ is said to be \emph{$\calP$-acylindrical} if there exists an integer $k$ such that, for every path $c$ of length $k$ in $T$, the stabilizer of $c$ belongs to $\calP$.
\end{definition}

Let $G$ and $Q$ be groups, and let $\calP$ be a collection of subgroups of $G$. A $\calP$-by-$Q$ group is a group $L$ that fits in a short exact sequence
\[1\to N \to L \to Q \to 1\]
for some $N$ in $\calP$.

The following is a result that will be helpful to give examples of $\fin$-Eilenberg-Ganea groups in \Cref{section:examples}.

\begin{theorem}\label{premain:thm}
Let $\mathbf{Y}$ be a graph of groups  with fundamental group $G$ and Bass-Serre tree $T$. Let $\calF$ be a family of subgroups of $G$. Assume
\begin{enumerate}[label=(\alph*)]
    \item\label{premain:thm:admissible} $\mathbf{Y}$ is $\calF$-admissible; and 
    \item\label{premain:thm:f0} every element of $\calF$ fixes a vertex in $T$.
\end{enumerate}
Then $G$ is a $\calF$-Eilenberg-Ganea group.
\end{theorem}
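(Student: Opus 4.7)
The plan is to establish the two equalities $\gd_\calF(G)=3$ and $\cd_\calF(G)=2$ by proving a pair of upper bounds and a pair of matching lower bounds.

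First, I would obtain $\gd_\calF(G)\le 3$ directly from \Cref{bass serre construction}. By assumption \ref{premain:thm:f0} combined with \Cref{remark:F0}, the family $\calF'$ appearing in \Cref{bass serre construction} coincides with $\calF$. The admissibility hypothesis \ref{premain:thm:admissible} gives $\gd_{\calF\cap Y_P}(Y_P)\le 3$ and $\gd_{\calF\cap Y_y}(Y_y)\le 2$, so the dimension formula in the proposition yields $\gd_\calF(G)\le \max\{2+1,3\}=3$. The matching lower bound $\gd_\calF(G)\ge 3$ comes from the standard subgroup restriction: if $X$ is any model for $E_\calF G$, viewing $X$ as a $Y_{P_0}$-space gives a model for $E_{\calF\cap Y_{P_0}}(Y_{P_0})$ of the same dimension, since isotropy under the restricted action lies in $\calF\cap Y_{P_0}$ and fixed point sets remain contractible. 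Admissibility ensures $\gd_{\calF\cap Y_{P_0}}(Y_{P_0})=3$, so $\gd_\calF(G)\ge 3$.

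The key computational step is to show $\cd_\calF(G)\le 2$. Here I would exploit the $G$-pushout \eqref{pushout:bass:serre} defining the model $X$ for $E_\calF G$. This pushout induces a Mayer--Vietoris long exact sequence in Bredon cohomology relating $H^*_\calF(G;M)$ to the cohomology of the vertex and edge pieces. Using a Shapiro-type isomorphism
\[
H^*_\calF\bigl(G\times_{Y_v}X_v;M\bigr)\;\cong\;H^*_{\calF\cap Y_v}\bigl(Y_v;\,\mathrm{res}_{Y_v}M\bigr),
\]
valid because each $X_v$ is a model for $E_{\calF\cap Y_v}Y_v$, the Mayer--Vietoris sequence in degree $n\ge 3$ has the form
\[
\cdots\to\bigoplus_{y\in E_+}H^{n-1}_{\calF\cap Y_y}(Y_y;M)\to H^n_\calF(G;M)\to\bigoplus_{P}H^n_{\calF\cap Y_P}(Y_P;M)\oplus\bigoplus_{y}H^n_{\calF\cap Y_y}(Y_y;M)\to\cdots
\]
For $n\ge 3$, the admissibility bounds $\cd_{\calF\cap Y_P}(Y_P)\le 2$ and $\cd_{\calF\cap Y_y}(Y_y)\le 1$ force both the leftmost term (degree $n-1\ge 2>1$) and the rightmost two direct summands (degrees $n\ge 3>2$ and $n\ge 3>1$) to vanish. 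Hence $H^n_\calF(G;M)=0$ for all $M$ and all $n\ge 3$, giving $\cd_\calF(G)\le 2$.

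Finally, $\cd_\calF(G)\ge 2$ follows once more by subgroup restriction: a projective resolution of $\dbZ_\calF$ over $\orf G$ restricts (cell-by-cell, using the decomposition of each free module $\dbZ[-,G/K]$ into $Y_{P_0}$-orbits via double cosets) to a projective resolution of $\dbZ_{\calF\cap Y_{P_0}}$ over $\calO_{\calF\cap Y_{P_0}}Y_{P_0}$ of the same length, so $\cd_\calF(G)\ge \cd_{\calF\cap Y_{P_0}}(Y_{P_0})=2$. Combining all four inequalities yields the desired conclusion that $G$ is an $\calF$-Eilenberg--Ganea group. The main obstacle is the Mayer--Vietoris step, which requires one to set up the Bredon version of the Shapiro isomorphism carefully and to verify that the pushout \eqref{pushout:bass:serre} is a genuine $G$-cofibrant pushout to which the long exact sequence applies; the remaining inequalities are formal consequences of subgroup restriction together with \Cref{bass serre construction}.
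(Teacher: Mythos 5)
Your proposal is correct and follows essentially the same route as the paper: the $3$-dimensional model from \Cref{bass serre construction} for $\gd_\calF(G)\le 3$, a Mayer--Vietoris long exact sequence arising from the action on the Bass--Serre tree to kill $H^{\ge 3}_\calF(G;M)$ and get $\cd_\calF(G)\le 2$, and monotonicity under restriction to the distinguished vertex group for both lower bounds. The only cosmetic difference is that you derive the exact sequence directly from the pushout \eqref{pushout:bass:serre} via a Shapiro-type isomorphism, whereas the paper obtains the same sequence by noting that $T$ is a model for $E_\calG G$ with $\calF\subseteq\calG$ and citing \cite[Remark 4.2]{MP02}.
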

\begin{proof}
Choose models $X_P$ (resp. $X_y$) for $E_{Y_P\cap\calF}Y_P$ (resp. $E_{Y_y\cap\calF}Y_y$) of minimal dimension for every vertex $P$ (resp. edge $y$) of $\mathbf{Y}$. Then by \Cref{bass serre construction} and hypothesis \ref{premain:thm:admissible} and \ref{premain:thm:f0}
we obtain a $3$-dimensional model $X$ for $E_{\calF} G$. Therefore $\cd_\calF(G)\leq\gd_\calF(G)\leq 3$.

We claim that $T$ is a model for $E_\calG G$, where $\calG$ is the family of subgroups of $G$ generated by the isotropy groups of $T$. In fact, since $T$ is a tree, the fixed point subspace $T^H$ is either empty or a tree, in particular contractible, for every subgroup $H$ of $G$ (see \cite[pg.~58]{Se03}). Moreover $T^H$ is non-empty if and only if $H$ is contained in the isotropy group of a point of $T$. Now the claim follows.

Hypothesis \ref{premain:thm:f0} implies $\calF\subseteq \calG$. Hence \cite[Remark 4.2]{MP02} yields the
long exact sequence
\begin{equation*}
\begin{multlined}
\cdots{\to} H_{\calF}^2(G;M) {\to} \bigoplus_{\vrt Y}H^2_{\calF\cap Y_P}(Y_P;M)
{\to} \bigoplus_{{y}\in E_+}H_{\calF\cap Y_y}^2(Y_y;M)
{\to}\\ H^3_{\calF}(G;M) {\to}
\bigoplus_{\vrt Y}H_{\calF\cap Y_P}^3(Y_P;M)\to\cdots
\end{multlined}
\end{equation*}
where $E_+$ is an orientation of $Y$. Note that the sums run over $G$-orbits of $0$- and $1$-cells of $T$ (compare with the pushout (\ref{pushout:bass:serre})).

By hypothesis \eqref{main:thm:admissible} $\cd_{\calF\cap Y_P}(Y_P)\leq 2$ and $\cd_{\calF\cap Y_y}(Y_y)\leq 1$ for every vertex $P$ and every edge $y$ of $\mathbf{Y}$. Thus the  third term and last term in the long exact sequence vanish.
Hence  $H^3_{\calF}(G;M)=0$, for every $\orf{G}$-module $M$. Therefore $\cd_{\calF}(G)\leq 2$.

By hypothesis \ref{premain:thm:admissible}, there exists a vertex $P$ of $\mathbf{Y}$ such that $Y_P$ is a $(Y_P\cap \calF)$-Eilenberg-Ganea group. Hence we have
\[ 3=\gd_{\calF\cap Y_P}(Y_P)\leq \gd_\calF (G) \]
and
\[ 2=\cd_{\calF\cap Y_P}(Y_P)\leq \cd_\calF (G). \]

\end{proof}

Now we state and prove our main result. This theorem will be used to give more examples of $\calF$-Eilenberg-Ganea groups in \cref{section:examples}.

\begin{theorem}\label{main:thm}
Let $\mathbf{Y}$ be a graph of groups  with fundamental group $G$ and Bass-Serre tree $T$. Let $\calF$ be a family of subgroups of $G$. Denote by $\calF_0$ (resp. $\calF_1$) the collection of all elements of $\calF$ that fix a vertex (resp. act cocompactly on a geodesic line) of $T$. Assume 
\begin{enumerate}
    \item\label{main:thm:admissible} $\mathbf{Y}$ is $\calF$-admissible.
    \item\label{main:thm:f0cupf1} $\calF=\calF_0\sqcup\calF_1$;
    \item\label{main:thm:acylindrical} the splitting of $G$ is $\calP$-acylindrical for some subgroup closed collection  $\calP\subseteq \calF_0$ of subgroups of $G$;
    \item\label{main:thm:pbyz} every $\calP$-by-$(\dbZ$ or $D_\infty)$ subgroup of $G$ belongs to $\calF$; and
    \item\label{main:thm:f0byZ2} every $\calF_0$-by-$\dbZ/2$ subgroup of $G$ belongs to $\calF_0$.
\end{enumerate}
Then $G$ is an $\calF$-Eilenberg-Ganea group.
\end{theorem}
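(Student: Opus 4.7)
The plan is to establish the upper bounds $\gd_\calF(G) \leq 3$ and $\cd_\calF(G) \leq 2$ via a two-stage construction: first build a $3$-dimensional model for $E_{\calF_0}G$ out of the Bass-Serre data, then enlarge it to handle the line-stabilizing subgroups in $\calF_1$ via the equivariant pushout technique of L\"uck and Weiermann. The matching lower bounds follow immediately from the subgroup inequalities $\gd_{\calF\cap Y_P}(Y_P) \leq \gd_\calF(G)$ and $\cd_{\calF\cap Y_P}(Y_P) \leq \cd_\calF(G)$ applied to the Eilenberg-Ganea vertex group $Y_P$ supplied by hypothesis~(1), exactly as in the closing paragraph of the proof of \Cref{premain:thm}.

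Note first that $\calF_0$ is itself a family: vertex-fixing is preserved under conjugation and under taking subgroups. Since any subgroup of $Y_P$ automatically fixes the vertex $P$, the restrictions $\calF_0 \cap Y_P$ and $\calF \cap Y_P$ coincide (and similarly for edge groups), so $\mathbf{Y}$ is automatically $\calF_0$-admissible and every element of $\calF_0$ fixes a vertex of $T$ by definition. \Cref{premain:thm} then applies to the pair $(\mathbf{Y},\calF_0)$ and yields $\gd_{\calF_0}(G) \leq 3$ and $\cd_{\calF_0}(G) \leq 2$.

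Next I analyze the stabilizers of lines. For $H \in \calF_1$ with invariant axis $\ell_H \subset T$, let $K_H = \stab_G(\ell_H)$ denote the setwise stabilizer. The $\calP$-acylindricity hypothesis~(3), together with the subgroup-closure of $\calP$, forces the pointwise stabilizer $N_{\ell_H}$ of $\ell_H$ into $\calP$, since $N_{\ell_H}$ sits inside the stabilizer of any length-$k$ subpath of $\ell_H$. The quotient $K_H/N_{\ell_H}$ embeds with finite index into $\Aut(\ell_H) \cong D_\infty$ (it contains the cocompact image of $H$) and is therefore isomorphic to $\dbZ$ or to $D_\infty$. Hypothesis~(4) then places $K_H \in \calF$, and since $K_H$ fixes no vertex of $T$ it in fact lies in $\calF_1$.

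Now define an equivalence relation on $\calF_1$ by $H_1 \sim H_2$ iff $\ell_{H_1} = \ell_{H_2}$; equivalently, iff $H_1$ and $H_2$ admit a common overgroup in $\calF_1$. The L\"uck-Weiermann normalizer $N_G[H]$ of a class $[H]$ equals $K_H$. Because $K_H \in \calF$, a single point is a model for $E_{\calF \cap K_H}K_H$. For the smaller family $\calF_0 \cap K_H$, I claim that $\dbR$, with $K_H$ acting via $K_H \to K_H/N_{\ell_H} \hookrightarrow D_\infty$ by affine isometries, is a $1$-dimensional model. Indeed, the stabilizer in $K_H$ of a point of $\dbR$ is the preimage of a point-stabilizer in $D_\infty$; its quotient by $N_{\ell_H}$ is trivial or $\dbZ/2$. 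Such a preimage $L$ either lies entirely in $N_{\ell_H}$ or fits into $1 \to L \cap N_{\ell_H} \to L \to \dbZ/2 \to 1$, and since $L \cap N_{\ell_H} \in \calP \subseteq \calF_0$, hypothesis~(5) forces $L \in \calF_0$. Conversely, tree geometry shows that any subgroup of $K_H$ fixing a vertex of $T$ fixes a vertex of $\ell_H$, hence a point of $\dbR$. Feeding all of this into the L\"uck-Weiermann pushout for $\calF_0 \subseteq \calF$ (using hypothesis~(2) to guarantee that $\calF\setminus\calF_0=\calF_1$ is exactly partitioned by the equivalence classes), the resulting model for $E_\calF G$ has dimension at most $\max(3,\,0,\,1+1)=3$, and the associated Mayer-Vietoris long exact sequence specializes at $n=3$ to
\begin{equation*}
\bigoplus_{[H]} H^2_{\calF_0 \cap K_H}(K_H;M) \to H^3_\calF(G;M) \to H^3_{\calF_0}(G;M) \oplus \bigoplus_{[H]} H^3_{\calF \cap K_H}(K_H;M),
\end{equation*}
whose outer terms all vanish by the cohomological dimension estimates obtained above. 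The main obstacle is the coordinated use of hypotheses (3), (4), (5) to pin down the structure of $K_H$ and identify $\dbR$ as a model for $E_{\calF_0 \cap K_H}K_H$; once those are in place, the pushout-level bookkeeping is essentially formal.
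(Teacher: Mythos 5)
Your proposal is correct and follows essentially the same route as the paper: a $3$-dimensional model for $E_{\calF_0}G$ from the Bass--Serre data, then coning off the axes of the $\calF_1$-subgroups (your L\"uck--Weiermann pushout is exactly the paper's coning-off construction, with the same use of hypotheses (3)--(5) to identify the line stabilizers $K_H$ as $\calP$-by-$(\dbZ$ or $D_\infty)$ and to show that $\dbR$ models $E_{\calF_0\cap K_H}K_H$ while a point models $E_{\calF\cap K_H}K_H$), followed by the same lower-bound argument via the Eilenberg--Ganea vertex group. The only cosmetic difference is that you deduce $\cd_\calF(G)\leq 2$ from the Mayer--Vietoris sequence of the pushout, whereas the paper constructs an explicit length-$2$ projective resolution of $\dbZ_\calF$ via the Horseshoe lemma applied to $0\to \bar\dbZ_{\calF_0}\to \dbZ_\calF\to Q\to 0$ --- two packagings of the same homological input.
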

\begin{proof}
We will prove this theorem in several steps.
\begin{step}\label{step:gd:less:3}
There exists a $3$-dimensional model for $E_{\calF_0 }G$. 
\end{step}

Choose models $X_P$ (resp. $X_y$) for $E_{Y_P\cap\calF}Y_P$ (resp. $E_{Y_y\cap\calF}Y_y$) of minimal dimension for every vertex $P$ (resp. edge $y$) of $\mathbf{Y}$. Then by \Cref{bass serre construction} (see also \Cref{remark:F0}) and hypothesis \eqref{main:thm:admissible} we obtain a $3$-dimensional model $X$ for $E_{\calF_0} G$.

\begin{step}\label{step:attaching:2cells}
We can attach cells of dimension at most $2$ to $X$ to obtain a $3$-dimensional model $Y$ for $E_{\calF} G$. In particular $\gd_{\calF}(G)\leq 3$. 
\end{step}

Let $H$ be an element of $\calF_1$. Denote by $\gamma_H$ the geodesic line of $T$ on which $H$ acts cocompactly. Also denote by $\stab_G(\gamma_H)$ and $\fix_G(\gamma_H)$ the setwise and the pointwise stabilizers of $\gamma_H$ in $G$ respectively. Then we have a short exact sequence
\[1\to \fix_G(\gamma_H) \to \stab_G(\gamma_H) \to D \to 1\]
where $D$ is isomorphic to $\dbZ$ or $D_\infty$. Since $\gamma_H$ contains paths of arbitrary large length,  by hypothesis \eqref{main:thm:acylindrical} $\fix_G(\gamma_H)$ belongs to $\calP\subseteq\calF_0$. By hypothesis \eqref{main:thm:pbyz} $\stab_G(\gamma_H)$ belongs to $\calF_1\subseteq \calF$.

Every isotropy group of the action of $\stab_G(\gamma_H)$ on $\gamma_H\cong \dbR$ is isomorphic to a $\fix_G(\gamma_H)$-by-(1 or $\dbZ/2$) subgroup of $\stab_G(\gamma_H)$. Such an action can be seen as the action induced by the surjection $\stab_G(\gamma_H) \to D$. Thus by hypothesis \eqref{main:thm:f0byZ2} we have that $\gamma_H\cong \dbR$ is a model for $E_{\calF_0\cap \stab_G(\gamma_H)} \stab_G(\gamma_H)$.

Define $\calH$ to be the set of all geodesics $\gamma_H$ of $T$ for $H\in \calF_1$ (note that $\calH$ is in bijection with  the collection of all maximal elements of $\calF_1$). For every $\gamma_H$ in $\calH$ we have an $H$-map $E_{\calF_0\cap H} H=\gamma_H\to X$, which induces a $G$-map $G\times_H\gamma_H\to X$. The latter, from now on, will be assumed to be an inclusion by replacing $X$ with the mapping cylinder of $G\times_H\gamma_H\to X$ if necessary.  

Denote by \{*\} the one-point-space.  Let $Y$ be the $G$-CW-complex given by the following homotopy $G$-pushout

\[\xymatrix{\coprod_{\gamma_H\in\calH} G\times_{H}\gamma_H \ar[d] \ar[r]  & X\ar[d]\\
\coprod_{\gamma_H\in\calH}G\times_{H}\{*\}\ar[r] & Y
}
\]
Hence, $Y$ is essentially obtained from $X$ coning-off all geodesics $\gamma_H$, for $H\in \calH$. In consequence $Y$ is obtained from $X$ by attaching cells of dimension at most $2$.

Next we will verify that $Y$ is a model for $E_\calF G$. 

Note that every point $x$ of $Y$ is either a point of $X$ or the vertex of a coned geodesic $\gamma_H$. In the latter case we will say $x$ is a conic point. If $x$ is a point of $X$, then $G_x$ is an element of $\calF_0$. If $x$ is conic point of $Y$, then $G_x$ has the form $\stab_G(\gamma_H)$, thus is an element of $\calF_1$. Therefore for every $x\in X$, the isotropy group $G_x$ belongs to $\calF$.

Let $K$ be an element in $\calF$. If $K$ belongs to $\calF_0$, then $Y^K$ can be obtained from $X^K$ coning-off some geodesic segments (those corresponding to $X^K\cap \gamma_H^K$). Thus $Y^K$ is contractible. If $K$ is an element of $\calF_1$, then $Y^K$ is a union of conic points. It is enough to observe that $K$ can only act cocompactly on one geodesic line of $T$. In fact, since $K$ acts cocompactly on a geodesic of $T$, it has to contain a hyperbolic isometry $g$ of $T$. It is well known that a hyperbolic isometry of $T$ acts by translation on a unique geodesic of $T$ (because $T$ is a tree). Thus the geodesic upon which $K$ is acting is unique. Hence $Y^K$ consists of only one conic point. Therefore for every $K$ in $\calF$, $Y^K$ is contractible. This finishes the proof that $Y$ is a model for $E_\calF G$.

\begin{step}
There is an $\calO_\calF G$-projective resolution of $\dbZ_\calF$ of length $2$. It follows that  $\cd_\calF (G)\leq 2$.
\end{step}

To prove $\cd_\calF(G)\leq 2$ we can use a similar argument to \cite[Theorem~7]{FL14}, to construct a projective resolution of $\dbZ_{\calF}$ of length $2$. We include this argument for completeness. Consider the following short exact sequence of $\calO_\calF G$-modules

\[1\to \bar\dbZ_{\calF_0}\to \dbZ_{\calF} \to Q\to 1\]
where $Q$ is the $\calO_\calF G$-module given by $Q(G/H)=\dbZ$ if $H\in \calF_1$ and $Q(G/H)=0$ if $H\in \calF_0$.

If we find projective resolutions for $\bar\dbZ_{\calF_0}$ and $Q$ of length   $2$, we can apply the Horseshoe lemma to obtain a projective resolution for $\dbZ_{\calF}$ of length $2$. 

By \Cref{step:attaching:2cells} we obtain a model $Y$ for $E_{\calF}G$ from a model $X$ for $E_{\calF_0}G$ by attaching cells of dimension at most $2$. Then the Bredon cellular chain complex $C_*(Y,X)$ gives a length $2$ projective resolution for $Q$.

Next, we will prove that $\cd_{\calF_0}(G)\leq 2$. 

 Using an argument similar to that in the proof of \Cref{premain:thm}, we get the
long exact sequence
\begin{equation*}
\begin{multlined}
\cdots{\to} H_{\calF_0}^2(G;M) {\to} \bigoplus_{\vrt Y}H^2_{\calF_0\cap Y_P}(Y_P;M)
{\to} \bigoplus_{{y}\in E_+}H_{\calF_0\cap Y_y}^2(Y_y;M)
{\to}\\ H^3_{\calF_0}(G;M) {\to}
\bigoplus_{\vrt Y}H_{\calF_0\cap Y_P}^3(Y_P;M)\to\cdots
\end{multlined}
\end{equation*}

Note that $\calF\cap Y_P=\calF_0\cap Y_P$ and $\calF\cap Y_y=\calF_0\cap Y_y$ for every vertex $P$ and every edge $y$ of $\mathbf{Y}$. By hypothesis \eqref{main:thm:admissible} $\cd_{\calF\cap Y_P}(Y_P)\leq 2$ and $\cd_{\calF\cap Y_y}(Y_y)\leq 1$ for every vertex $P$ and every edge $y$ of $\mathbf{Y}$. Thus the  third term and last term vanish.
Therefore  $H^3_{\calF_0}(G;M)=0$, for every $\calO_{\calF_0} G$-module $M$. Hence $\cd_{\calF_0}(G)\leq 2$ and, as a consequence, there exists a projective resolution of $\calO_{\calF_0} G$-modules for $\dbZ_{\calF_0}$ of length two:
\[0\to P_2\to P_1\to P_0\to \dbZ_{\calF_0}\to 0\]
We still have to promote this projective resolution of $\calO_{\calF_0} G$-modules to a resolution for $\bar\dbZ_{\calF_0}$ in the category of $\calO_{\calF} G$-modules. For this consider the natural inclusion functor $\calO_{\calF_0} G\to \calO_{\calF} G$ and the associated induction functor $\mathrm{ind}_{\calF_0}^\calF$, thus by \cite[Lemma~7.3]{DP14} $\mathrm{ind}_{\calF_0}^\calF(\dbZ_{\calF_0})=\bar \dbZ_{\calF_0}$ and by \cite[Lemma~7.5]{DP14}
\[0\to \mathrm{ind}_{\calF_0}^\calF{P_2}\to \mathrm{ind}_{\calF_0}^\calF{P_1}\to \mathrm{ind}_{\calF_0}^\calF{P_0}\to \mathrm{ind}_{\calF_0}^\calF(\dbZ_{\calF_0})=\bar\dbZ_{\calF_0}\to 0\]
is a projective resolution of $\calO_{\calF} G$-modules of length 2.

\begin{step}
We have $\cd_\calF (G)\geq 2$ and $\gd_\calF (G)\geq 3$.
\end{step}

By hypothesis \ref{main:thm:admissible}, there exists a vertex $P$ of $\mathbf{Y}$ such that $Y_P$ is a $(Y_P\cap \calF)$-Eilenberg-Ganea group. Hence we have
\[ 3=\gd_{\calF\cap Y_P}(Y_P)\leq \gd_\calF (G) \]
and
\[ 2=\cd_{\calF\cap Y_P}(Y_P)\leq \cd_\calF (G). \]

This finishes the whole proof.

\end{proof}

\begin{remark}
Our definition of $\calP$-acylindrical splitting is a natural generalization of the notion of acylindrical splitting in \cite{LO09}. This property allows us to gain control on the stabilizers of geodesics, which is a very important part of our proof.
\end{remark}

\begin{remark}\label{remark:slender} The following observations concern hypothesis (2) of \Cref{main:thm}.
\begin{enumerate}
    \item Not every family $\calF$ of $G$ will satisfy hypothesis \eqref{main:thm:f0cupf1} in \cref{main:thm}. In fact, we may have a subgroup of $G$ that neither fixes a point nor acts on a geodesic by translations.  For instance, let $F$ be a subgroup of $G$ generated by two elements $a$ and $b$. Assume that $a$ and $b$ act as non-trivial translations on two distinct geodesics $l_a$ and $l_b$ of $T$ respectively. By \cite[Proposition~25]{Se03}, neither $a$ nor $b$ have a fixed point in $T$. Therefore $F$ does not fix a point in $T$. On the other hand, as a consequence of \cite[Proposition~24(iii)]{Se03} each $a$ and $b$ stabilize a unique geodesic, therefore $F$ cannot stabilize a geodesic because $l_a\neq l_b$.

\item Note that no subgroup of $G$ fixes a point \emph{and} acts on a geodesic by non-trivial translations, i.e. $\calF_0\cap \calF_1=\emptyset$. Therefore, in hypothesis (2) of \Cref{main:thm}, we can actually replace the disjoint union by the union $\calF_0\cup \calF_1=\calF$ . To prove this claim assume $H$ is a subgroup of $G$ that fixes a point, then by \cite[Proposition~25]{Se03}, no element of $H$ acts as a translation on a geodesic of $T$. If $H$ acts by translation on a geodesic, then at least one element of $T$ should acts as a translation on that geodesic, which is a contradiction. Therefore no element in $\calF_0$ can be an element in $\calF_1$. 

\item From \cite[Lemma~1.1]{DS99} we know that, if $H$ is a \emph{slender} group (every subgroup of $H$ is finitely generated) acting on a tree, then either it fixes a vertex or it stabilizes a geodesic. Thus, if every element of $\calF$ is slender, then we have that $\calF$ satisfies hypothesis \eqref{main:thm:f0cupf1} in \cref{main:thm}. 
Examples of slender groups are: finite groups and virtually poly-cyclic groups. This is one of the reasons our theorem applies to the examples in Section \ref{section:examples}.
\end{enumerate}
\end{remark}

\section{Examples}\label{section:examples}

In this section we give some applications of \Cref{premain:thm} and \Cref{main:thm}.

\subsection{The family of finite subgroups} 
Recall that, given $G$, $\fin$ denotes the family of finite subgroups of $G$. Also we denote by $\cdfin(G)$, $\gdfin(G)$, and $\underline{E}G$ the $\fin$-cohomological dimension, $\fin$-geometric dimension, and classifying space of $G$ with respect to the family of finite subgroups, respectively.

\begin{theorem}\label{fin:eilenberg:ganea}
Let $\mathbf{Y}$ be a graph of groups  with fundamental group $G$ and Bass-Serre tree $T$. Let $\fin$ be a family of finite subgroups of $G$. Assume $\mathbf{Y}$ is $\fin$-admissible. Then $G$ is a $\fin$-Eilenberg-Ganea group.
\end{theorem}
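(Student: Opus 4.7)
The plan is to apply \Cref{premain:thm} directly with $\calF = \fin$. Hypothesis \ref{premain:thm:admissible} of that theorem is precisely the assumption that $\mathbf{Y}$ is $\fin$-admissible, which is given in the statement. Therefore the only point that needs to be verified is hypothesis \ref{premain:thm:f0}: every finite subgroup of $G$ fixes a vertex of the Bass-Serre tree $T$.

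This is a classical fact of Bass-Serre theory: any finite group acting without inversions on a tree has a fixed vertex (see \cite{Se03}; the standard argument is that the barycenter of any bounded orbit of vertices in a tree is itself a fixed vertex). Since $G$ acts on $T$ without inversions by construction of the Bass-Serre tree, the restriction of this action to any finite subgroup $H\leq G$ is again without inversions, and hence $H$ fixes some vertex of $T$. This establishes hypothesis \ref{premain:thm:f0}.

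With both hypotheses of \Cref{premain:thm} verified, the conclusion that $G$ is an $\fin$-Eilenberg-Ganea group is immediate. I expect no genuine obstacle in this argument; in particular, the heavier machinery of \Cref{main:thm}---with its subfamily $\calF_1$, the $\calP$-acylindricity condition, and the various extension hypotheses---is not needed here, since finiteness of the subgroups in $\fin$ automatically forces $\fin \subseteq \calF_0$ and hence $\calF_1 = \emptyset$ in the notation of that theorem.
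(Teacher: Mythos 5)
Your proposal is correct and follows exactly the paper's own route: the paper likewise deduces the theorem directly from \Cref{premain:thm}, citing the classical fact that a finite group acting on a tree fixes a vertex to verify hypothesis \ref{premain:thm:f0}. Your additional remarks (the barycenter argument and the observation that $\calF_1=\emptyset$, so \Cref{main:thm} is not needed) are accurate but not required.
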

\begin{proof}
It is well known that every finite group acting on a tree has a fixed point. Thus this theorem is a direct consequence of \Cref{premain:thm}.
\end{proof}

\begin{remark}
Notice that it is not difficult to construct an $\fin$-admissible graph of groups $\mathbf{Y}$. In fact, we can place one of the groups described in \cite{BLN01} in one of the vertex of $\mathbf{Y}$. For all the other vertex groups $Y_P$ and edge groups $Y_y$ we have to choose groups such that $\underline{\gd}(Y_P)\leq 3$ and $\underline{\cd}(Y_P)\leq 2$, and $\underline{\gd}(Y_y)\leq 2$ and $\underline{\cd}(Y_y)\leq 1$. Examples of groups satisfying these conditions are finite groups  and virtually free groups. 
\end{remark}

\begin{remark}
As a direct consequence of \Cref{fin:eilenberg:ganea} we have that the collection of $\fin$-Eilenberg-Ganea groups is closed under taking amalgamated products with finite amalgam, and it is closed under taking HNN-extensions over finite subgroups. More explicitelly, if $G$ and $H$ are $\fin$-Eilenberg-Ganea groups and $N$ is a common finite subgroup, then $G*_N H$ and $G*_N$ are $\fin$-Eilenberg-Ganea groups.
\end{remark}

\subsection{The family of virtually abelian groups of rank at most $r$}

Let $G$ be a discrete group and let $n\geq0$ be an integer. We say a group is \emph{$\dbZ^r$-group} (resp. \emph{virtually $\dbZ^r$}) if it is isomorphic to $\dbZ^r$ (resp. contains a finite index subgroup isomorphic to $\dbZ^r$). Define the following family of subgroups of $G$
\[
\calF^n = \{ K\leq G | K \text{ is virtually }\dbZ^r \text{ for some }r\leq n  \}
\]

Note that $\calF^0=\fin$ and $\calF^1=\vcyc$. The family $\calFn$ has been recently studied in \cite{LO18}, \cite{HP19}, \cite{Pr18}.

\begin{theorem}\label{vituallyZ:theorem}
Let $\mathbf{Y}$ be a graph of groups  with fundamental group $G$. Let $n\geq1$.  Assume the following conditions hold:
\begin{enumerate}[label=(\Alph*)]
    \item\label{virtuallyZ:theorem:admissible} $\mathbf{Y}$ is $\calFn$-admissible.
    \item\label{virtuallyZ:theorem:acylindrical} The splitting of $G$ given by $\mathbf{Y}$ is $\fin$-acylindrical.
\end{enumerate}
Then $G$ is an $\calF^n$-Eilenberg-Ganea group.
\end{theorem}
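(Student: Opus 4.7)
My plan is to verify each of the five hypotheses of \Cref{main:thm} for the family $\calF=\calFn$ with auxiliary collection $\calP=\fin$, and then invoke that theorem. Hypothesis \eqref{main:thm:admissible} is precisely assumption \ref{virtuallyZ:theorem:admissible}. For hypothesis \eqref{main:thm:acylindrical}, $\fin$ is closed under subgroups, and every finite subgroup of $G$ fixes a vertex of $T$ (a classical fact about finite groups acting without inversions on a tree, see \cite{Se03}), so $\fin\subseteq \calFn_0$; combined with \ref{virtuallyZ:theorem:acylindrical} this yields $\calP$-acylindricity. Hypothesis \eqref{main:thm:pbyz} is immediate, since a finite-by-$(\dbZ$ or $D_\infty)$ group is virtually infinite cyclic and hence belongs to $\calFn$ for every $n\geq 1$.

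To check hypothesis \eqref{main:thm:f0cupf1}, I would use that every element of $\calFn$ is virtually polycyclic and therefore slender. The Dunwoody-Sageev result recalled in \Cref{remark:slender}\,(3) then implies that any $H\in \calFn$ either fixes a vertex of $T$ or stabilizes a geodesic line $\gamma\subseteq T$ setwise. In the latter case, the action of $H$ on $\gamma$ factors through a subgroup of $D_\infty$; if that image is finite, then since $G$ acts on $T$ without inversions the image must fix a vertex of $\gamma$, and so does $H$, placing $H$ in $\calFn_0$; otherwise the image is infinite and acts cocompactly on $\gamma$, placing $H$ in $\calFn_1$. The disjointness $\calFn_0\cap \calFn_1=\emptyset$ is the content of \Cref{remark:slender}\,(2).

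The delicate step is hypothesis \eqref{main:thm:f0byZ2}. Given a short exact sequence $1\to N\to L\to \dbZ/2\to 1$ with $N\in \calFn_0$, the index-two subgroup $N$ is virtually $\dbZ^r$ for some $r\leq n$, hence so is $L$, giving $L\in \calFn$. Since $N$ is normal in $L$, the fixed-point subtree $T^N\subseteq T$ is non-empty and $L$-invariant. Because $L\leq G$ acts on $T$ without inversions, the induced action of $L$ on $T^N$ is again without inversions, and so is the quotient action of $L/N\cong \dbZ/2$ on $T^N$. A finite group acting without inversions on a tree necessarily has a fixed vertex, providing a vertex of $T$ fixed by $L$; hence $L\in \calFn_0$.

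Once all five hypotheses are verified, \Cref{main:thm} delivers the conclusion. I expect the main obstacle to be hypothesis \eqref{main:thm:f0byZ2}: the subtlety is ensuring that the no-inversion property on $T$ descends to the subtree $T^N$ and then to the $\dbZ/2$-quotient action, so that the classical fixed-point theorem for finite group actions on trees can be applied.
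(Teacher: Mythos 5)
Your proposal is correct and follows essentially the same route as the paper: both deduce the theorem from \Cref{main:thm} with $\calP=\fin$, using slenderness of virtually $\dbZ^r$ groups for hypothesis \eqref{main:thm:f0cupf1} and the fact that $\fin$-by-$(\dbZ$ or $D_\infty)$ groups are virtually cyclic for hypothesis \eqref{main:thm:pbyz}. The only divergence is at hypothesis \eqref{main:thm:f0byZ2}, where the paper simply observes that any $\fin$-by-$\dbZ/2$ group is finite (which matches how that hypothesis is actually used inside the proof of \Cref{main:thm}), whereas you verify the literal $\calFn_0$-by-$\dbZ/2$ statement via the $L$-invariant fixed-point subtree $T^N$ and the fixed-point theorem for finite groups acting without inversions --- a more thorough, and correct, check.
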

\begin{proof}
We will see that this theorem is a consequence of \Cref{main:thm}. By \Cref{remark:slender}(3), every virtually $\dbZ^r$ group acting on a tree either fixes a vertex or it acts cocompactly on a geodesic. Thus, in notation of \Cref{main:thm} we have $\calFn=\calFn_0\sqcup \calFn_1$. Moreover every element of $\calFn_1$ is a virtually cyclic subgroup by hypothesis \ref{virtuallyZ:theorem:acylindrical}.

In the statement of \Cref{main:thm} we choose $\calP=\fin$. Every $\fin$-by-$(\dbZ$ or $D_\infty)$ is virtually cyclic, hence belongs to $\calFn$. Clearly any $\fin$-by-$\dbZ/2$ is finite. Thus we have verified all hypothesis of \Cref{main:thm}, and the result follows. 
\end{proof}

\begin{corollary}\label{corollary:finite:edge:stab}
Let $\mathbf{Y}$ be a graph of groups and let $G=\pi_1(\mathbf{Y})$. Let $n\geq 0$ be an integer.  Assume that $\mathbf{Y}$ is $\calFn$-admissible with  finite edge groups.
Then $G$ is an $\calFn$-Eilenberg-Ganea group.
\end{corollary}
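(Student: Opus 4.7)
The plan is to reduce the corollary to the two main theorems of the paper by splitting into cases on $n$. When $n=0$, we have $\calF^0=\fin$, and $\calF^0$-admissibility gives the hypothesis of \Cref{fin:eilenberg:ganea} directly, with no acylindricity condition needed. So the main case is $n\geq 1$, where the target is \Cref{vituallyZ:theorem}.

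For $n\geq 1$, hypothesis \ref{virtuallyZ:theorem:admissible} of \Cref{vituallyZ:theorem} is exactly the assumed $\calFn$-admissibility. The only remaining thing to check is \ref{virtuallyZ:theorem:acylindrical}, i.e.\ that the splitting of $G$ given by $\mathbf{Y}$ is $\fin$-acylindrical in the sense of \Cref{rel:acylindrical:def}. By the standard description of the Bass-Serre tree $T$, the $G$-stabilizer of an edge of $T$ is conjugate in $G$ to an edge group $Y_y$. Since by hypothesis every $Y_y$ is finite, every edge stabilizer of the $G$-action on $T$ is finite. Hence, for any path $c$ of length $k=1$ in $T$, $\stab_G(c)$ is finite, so belongs to $\fin$; this witnesses the $\fin$-acylindricity of the splitting with constant $k=1$.

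With both hypotheses of \Cref{vituallyZ:theorem} in place, we conclude that $G$ is an $\calFn$-Eilenberg-Ganea group, which finishes the argument. There is essentially no obstacle here; the only subtlety worth flagging is the $n=0$ case, which must be routed through \Cref{fin:eilenberg:ganea} rather than \Cref{vituallyZ:theorem} because the latter is stated for $n\geq 1$ (and since finite groups always fix a vertex of $T$, no acylindricity hypothesis is needed there anyway).
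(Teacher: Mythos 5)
Your proof is correct and follows essentially the same route as the paper: finite edge groups make the splitting $\fin$-acylindrical, so \Cref{vituallyZ:theorem} applies. Your explicit routing of the $n=0$ case through \Cref{fin:eilenberg:ganea} is a small but welcome refinement, since \Cref{vituallyZ:theorem} is stated only for $n\geq 1$ while the corollary allows $n\geq 0$, a point the paper's own proof glosses over.
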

\begin{proof}
Since $Y_y$ is finite for all edge $y$, it is easy to see that $\gdfn(Y_y)=\cdfn(Y_y)=0$. Since the edge groups are finite, the splitting of $G$ is $\fin$-acylindrical. Thus the result follows from \Cref{vituallyZ:theorem}.

\end{proof}

\begin{remark}
Note that, the $\vcyc$-Eilenberg-Ganea groups constructed by M. Fluch and I. Leary, are $\calFn$-Eilenberg-Ganea groups for all $n\geq 0$.  Let $G$ be a Fluch-Leary group. From \cite{BLN01}, $G$ is a $\calF_0$-Eilenberg-Ganea group. On the other hand, since $G$ is hyperbolic, any abelian free subgroup of $G$ has rank at most $1$, hence $\calFn=\vcyc$ for all $n\geq1$. We can use \Cref{corollary:finite:edge:stab} to construct $\calFn$-Eilenberg-Ganea groups with $\vcyc\neq \calFn$ for $n\geq 2$. For instance, we can take as vertex groups of $\mathbf{Y}$ some Fluch-Leary groups and at least one free abelian group of rank $\leq n$  to get a $\calFn$-Eilenberg-Ganea group such that $\vcyc$ is a proper subfamily of $\calFn$, provided $n\geq2$.

\end{remark}

\subsection{The family of (virtually) polycyclic subgroups}

All the results of this section hold if you replace the word polycyclic by virtually polycyclic.

Recall a group $H$ is said to be \emph{polycyclic} if there exist a finite chain  of subgroups 
\[1=H_0 \trianglelefteq H_1 \trianglelefteq\cdots \trianglelefteq H_{n-1} \trianglelefteq H_n=H,\]
such that each factor group $H_i/H_{i-1}$ is cyclic. The number $r$, of infinite cyclic factor groups, is a well-defined invariant of $H$ and is called the \emph{Hirsch rank} of $H$.

Consider the following family

\[
\calP_n = \{ K\leq G | K \text{ is polycyclic of Hirsch rank at most }n  \}
\]

\begin{theorem}\label{thm:polycyclic}
Let $\mathbf{Y}$ be a graph of groups  with fundamental group $G$. Let $n\geq 0$ be an integer.  Assume the following conditions hold:
\begin{enumerate}[label=(\roman*)]
    \item $\mathbf{Y}$ is an $\calP_n$-admissible.
    \item The splitting of $G$ given by $\mathbf{Y}$ is $\calP_{n-1}$-acylindrical.
\end{enumerate}
Then $G$ is a $\calP_n$-Eilenberg-Ganea group.
\end{theorem}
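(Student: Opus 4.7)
My plan is to deduce the statement directly from Main Theorem \ref{main:thm} applied with $\calF=\calP_n$, verifying conditions (2)--(5) in order. Hypothesis (1) of the Main Theorem is exactly (i) above. For hypothesis (2), every polycyclic group is slender (each subgroup of a polycyclic group is again polycyclic, hence finitely generated), so Remark \ref{remark:slender}(3) gives the disjoint decomposition $\calP_n=(\calP_n)_0\sqcup(\calP_n)_1$.

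The delicate point is hypothesis (3). The natural candidate $\calP=\calP_{n-1}$ is \emph{not} in general contained in $(\calP_n)_0$: for $n\ge 2$ a subgroup of $G$ isomorphic to $\dbZ$ may act by translation on $T$ without fixing any vertex. I would remedy this by taking
\[
\calP:=\calP_{n-1}\cap (\calP_n)_0,
\]
the polycyclic subgroups of Hirsch rank $\leq n-1$ that additionally fix a vertex of $T$. This collection is subgroup-closed, since $\calP_{n-1}$ is subgroup-closed (Hirsch rank is monotone under subgroups) and a subgroup of a vertex-stabilizer is again a vertex-stabilizer; and $\calP\subseteq (\calP_n)_0$ by construction. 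The splitting remains $\calP$-acylindrical with the constant $k$ provided by (ii), because each stabilizer of a path of length $k$ lies in $\calP_{n-1}$ by hypothesis and also fixes each vertex of that path, hence lies in $\calP$.

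Hypothesis (4) is then immediate from the additivity of Hirsch rank in extensions together with the fact that extensions of polycyclic groups by polycyclic groups are polycyclic: any $\calP$-by-$(\dbZ\text{ or }D_\infty)$ group is polycyclic of Hirsch rank $\leq (n-1)+1=n$, hence lies in $\calP_n$. Hypothesis (5) requires that, given $1\to N\to L\to \dbZ/2\to 1$ with $N\in(\calP_n)_0$, the group $L$ again lies in $(\calP_n)_0$. That $L$ is polycyclic of Hirsch rank $\leq n$ is clear. Since $L$ is slender, Remark \ref{remark:slender}(3) forces $L$ either to fix a vertex (and we are done), or to contain a hyperbolic isometry $g$ of $T$; in the latter case $g^2\in N$ is also hyperbolic, which is incompatible with $N\in(\calP_n)_0$, a contradiction. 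Hence $L\in(\calP_n)_0$. With hypotheses (1)--(5) verified, Main Theorem \ref{main:thm} delivers the conclusion. The main obstacle, as anticipated above, is choosing the auxiliary collection $\calP$: the naive choice $\calP_{n-1}$ fails the containment $\calP\subseteq\calF_0$, and the intersection with $(\calP_n)_0$ is what makes the Main Theorem applicable.
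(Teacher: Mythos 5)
Your proposal is correct and follows the paper's overall strategy (apply \Cref{main:thm} with $\calF=\calP_n$, using slenderness of polycyclic groups for hypothesis (2) and additivity of the Hirsch rank for hypothesis (4)), but it diverges in two substantive details, both of which make your verification more careful than the paper's. First, the paper simply takes $\calP=\calP_{n-1}$, whereas you take $\calP=\calP_{n-1}\cap(\calP_n)_0$; you are right that the naive choice need not satisfy the containment $\calP\subseteq\calF_0$ demanded by hypothesis (3) when $n\geq 2$ (an infinite cyclic subgroup of Hirsch rank $\leq n-1$ can act hyperbolically on $T$), and your intersection with the vertex-fixing subgroups repairs this while preserving subgroup-closedness and $\calP$-acylindricity, since pointwise stabilizers of length-$k$ paths fix vertices. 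Since the only role of $\calP$ in the proof of \Cref{main:thm} is to place $\fix_G(\gamma_H)$ into $\calF_0$, and $\fix_G(\gamma_H)$ automatically fixes vertices, both choices ultimately work, but yours matches the hypothesis as literally stated. Second, for hypothesis (5) the paper only checks that $\calP_{n-1}$-by-$\dbZ/2$ groups stay in $\calP_{n-1}$, which is not what (5) asks; you verify the actual statement that $(\calP_n)_0$-by-$\dbZ/2$ subgroups lie in $(\calP_n)_0$, and your argument (such an extension $L$ is polycyclic of Hirsch rank $\leq n$, and if $L$ contained a hyperbolic element $g$ then $g^2\in N$ would be hyperbolic, contradicting $N\in(\calP_n)_0$) is complete and correct. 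In short: same architecture, but your version closes two small gaps in the paper's own verification of the hypotheses of \Cref{main:thm}.
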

\begin{proof}
We will see that this theorem is a consequence of \Cref{main:thm}. By \Cref{remark:slender}(3), every virtually polycyclic group acting on a tree either fixes a vertex or it acts cocompactly on a geodesic. Thus, in notation of \Cref{main:thm} we have $\calFn=\calFn_0\sqcup \calFn_1$.

Moreover every element of $\calFn_1$ is a virtually cyclic subgroup by hypothesis (2) in \Cref{main:thm}.

In the statement of \Cref{main:thm} we choose $\calP=\calP_{n-1}$. Every $\calP_{n-1}$-by-$(\dbZ$ or $D_\infty)$ belongs to $\calP_n$ because a polycyclic-by-polycyclic group is again polycyclic, and the Hirsch rank is additive with respect to extensions. Finally, any $\calP_{n-1}$-by-$\dbZ/2$ lives in $\calP_{n-1}$. Thus we have verified all hypothesis of \Cref{main:thm}, and the result follows. 

\end{proof}

\begin{remark}
Let $H$ be one of the Fluch-Leary examples. Then the family $V\calP_n$ of virtually polycyclic subgroups of $H$ coincides with the family $\vcyc$ of virtually cyclic subgroups. In fact, from \cite[Example~5.26]{Lu05} we have that for every virtually polycyclic group $P$ of Hirsch rank $r$, $\vcd(P)=\underline{\gd}(P)=r$. By \cite[Theorem~2]{BLN01} we have $\vcd(P)\leq \cdfin(P)\leq \gdfin(P)$. Hence $\cdfin(P)=\underline{\gd}(P)=r$.   By monoticity of the $\fin$-cohomological dimension $\cdfin(P)\leq \cdfin(H)=2$ , we get that every virtually polycyclic subgroup of $H$ has rank at most $2$. On the other hand, every virtually polycyclic subgroup of $H$ of rank $2$ contains a $\dbZ$-by-$\dbZ$ subgroup, hence it contains a $\dbZ^2$ subgroup, which is impossible because $H$ is hyperbolic. Thus every virtually polycyclic subgroup of $H$ has rank at most 1, i.e. it is virtually cyclic.

Thus we can construct $V\calP_n$-Eilenberg-Ganea groups for every $n\geq1$. Indeed consider $\mathbf{Y}$ a graph of groups with the following characteristics: we place $H$ in one of the vertex, finite group in the edges and we fill every other vertex with groups in such a way that $\mathbf{Y}$ is $V\calP_n$-admissible.
\end{remark}

\bibliographystyle{alpha} 
\bibliography{myblib}
\end{document}